\newtheorem{theorem}{Theorem}[section]
\newtheorem{proposition}[theorem]{Proposition}
\theoremstyle{definition}
\newtheorem{definition}[theorem]{Definition}
\newtheorem{remark}[theorem]{Remark}
\newtheorem{example}[theorem]{Example}
\theoremstyle{parrafo}
\newcommand{\R}{\mathbb{R}}
\begin{document}

\title[]{Besicovitch and doubling type properties in metric spaces}

\author{J. M. Aldaz}
\address{Instituto de Ciencias Matem\'aticas (CSIC-UAM-UC3M-UCM) and Departamento de 
Mate\-m\'aticas,
Universidad  Aut\'onoma de Madrid, Cantoblanco 28049, Madrid, Spain.}
\email{jesus.munarriz@uam.es}
\email{jesus.munarriz@icmat.es}

\thanks{2020 {\em Mathematical Subject Classification.} 30L99}
\thanks{Key words and phrases: \emph{metric measure spaces, Besicovitch covering properties, weak doubling, Hardy-Littlewood maximal operator.}}

\thanks{The author was partially supported by Grant PID2019-106870GB-I00 of the
MICINN of Spain, and also  by ICMAT Severo Ochoa project 
CEX2019-000904-S (MICINN)}







\begin{abstract} We explore the relationship in metric spaces between
different properties related to the Besicovitch covering
theorem, and also consider weak versions of doubling, in connection to the non-uniqueness of centers and radii in arbitrary metric spaces.

\end{abstract}


\maketitle


\markboth{J. M. Aldaz}{Besicovitch and doubling type  properties}

\section {Introduction} 

It is well known that in metric spaces neither centers not radii of balls are in general unique. For instance, every point in a ball of an ultrametric space is a center of the said
ball; less exotic examples can be found simply as subsets of $\R^d$ with the inherited distance. This issue is often little emphasized: consider for example the case of doubling measures, which are sometimes defined by requiring the existence of a constant $C \ge 1$ such that for all balls $B$, the measure $\mu$ satisfies  $\mu (2 B ) \le C \mu (B)$. Of course, $2 B$ is not intrinsically defined in terms of $B$ and may refer to several different balls, unless its center and its radius are unique.  

In this paper we consider Besicovitch and doubling type  properties in metric spaces, devoting more attention than customary to the uniqueness issue for centers and radii.

While the  Besicovitch covering theorem has several important consequences, 
the spaces for which it holds have often been regarded as being rather special, cf. for example \cite[pp. 7-8]{He}.  
It is thus natural to explore different properties of Besicovitch type,  see for instance \cite{LeRi}, \cite{LeRi2}. 
Recall that in the Besicovitch covering theorem one is given a set $A$ covered by a collection $\mathcal{C}$ of
centered balls, with uniformly bounded radii. The 
strong Besicovitch covering property takes the conclusion of the Besicovitch covering  theorem, regarding the existence of a uniformly bounded 
number of disjoint subcollections whose union still covers $A$, and makes it into a definition. The weaker notion of Besicovitch covering property, asks for the existence of a covering subcollection having uniformly 
bounded overlaps, so the intersections at any given point cannot
exceed a certain constant. The  Besicovitch intersection property is even weaker: given any collection of balls such that
no ball contains a suitably chosen center of another, it is required that the  overlap be uniformly bounded (there is no mention of any
set to be covered). The precise statements for these notions appear in Definition \ref{BCP}. 
Consideration of weak properties of Besicovitch type is interesting since more spaces  satisfy them, they are easier to 
prove, and for certain purposes  they
may be sufficient. For instance, the Besicovitch intersection property in metric measure spaces implies  the uniform weak type (1,1) of the centered maximal operator, cf. \cite{Al} (actually,  both conditions are equivalent).

It is shown in 
\cite[Example
3.4]{LeRi2} that the Besicovitch intersection property does not imply the Besicovitch covering property.
Here we complement this result by proving that the Besicovitch covering property does not imply the strong Besicovitch covering property, cf. Theorem \ref{notSBCP} below.

Covering arguments in connection with the boundedness properties of maximal operators often blow up radii or reduce them.  If one is not concerned with optimal constants and the measure is doubling, it does not really matter which center and radius is chosen for a ball $B$, as all balls that can be written as $2B$ will have comparable measure. However, the situation may change if one is trying to obtain sharper bounds.
In particular, with regard to the centered maximal operator, improving bounds may require reducing radii, a technique initiated in
\cite[Lemma]{StSt}.  Now, the result of considering smaller radii will
depend on the choice of center and radii. In fact, by reducing radii a ball $B$ can give rise to several disjoint smaller balls, concentric with $B$. To carry out certain covering arguments, it can be necessary to consider all the different centers and radii a ball may have.

The definition of doubling requires that the bound $\mu (B (x, 2r )) \le C \mu (B)$ hold
for all choices of $x$ and $r$ such that $B = B(x,r)$. We consider weaker variants where the existence of bounds is only assumed for some such choice. 
A modification of the standard covering argument shows that if for every ball $B$ it is always possible
to find a center $x$ and a radius $r$ such that $B = B(x,r)$ and $\mu (B (x, (1 + \sqrt{2}) r )) \le C \mu (B)$, then the uncentered maximal operator is of weak type $(1,1)$ with constant $C^2$, cf. Theorem \ref{HLMF}.

  \section{Properties  related to  the Besicovitch covering theorem} 
  
    By a disjoint collection of sets, we mean that all the sets in such a  collection are disjoint. Unless otherwise stated we will assume that metric spaces, collections of balls, etc, are nonempty, in order to avoid trivialities.

  The  strong Besicovitch covering property,   as far as I know, 
appears for the first time in \cite[Theorem 4.2 ii)]{Ri}. The  notion of Besicovitch covering property  is taken from \cite{LeRi}, but the
  condition had been utilized before.  The 
  definition of   Besicovitch intersection  property  comes essentially from  \cite{LeRi} (where it is called 
  the weak Besicovitch covering property).

  \begin{definition} \label{amp}  A metric space $X$ is   {\em ultrametric} 
if the standard triangle inequality
	is replaced by the stronger condition $d(x,y) \le \max\{d(x,w), d(w,y)\}$.
\end{definition}

As standard examples of ultrametric spaces, coming from Number Theory, we mention the $p$-adic metric spaces.

\vskip .2 cm

We will use $B^{\operatorname{o}}(x,r) := \{y\in X: d(x,y) < r\}$ to denote metrically open balls, 
and 
$B^{\operatorname{cl}}(x,r) := \{y\in X: d(x,y) \le r\}$ to refer to metrically closed balls;
open and closed will
always be understood  in the metric (not the topological) sense. 
If we do not want to specify whether balls are open or  closed,
we write $B(x,r)$. But when we utilize $B(x,r)$,  all balls are taken to be of the same kind, i.e., all open or all closed.

\begin{definition}  \label{cover} 
	Let  $(X, d)$ be a   metric space, and let  $\mathcal{C}$ be a collection of balls. We say that   $\mathcal{C}$ is {\em uniformly bounded}  if there exists an $R > 0$ such that given any ball
	$B \in \mathcal{C}$, there is a choice of center $x$ and of radius $r$ with $B = B(x,r)$ and $r \le R$.
		
	Additionally,
	 we say   $\mathcal{C}$ is a {\em centered cover} of $A$ if  for every $x\in A$,  there exist a ball
	$B \in \mathcal{C}$ such that $x$ is a center of $B$.
\end{definition}

\begin{example} Let $X = (0,1)$. Then $\mathcal{C} = \{B(x,1): x \in X\} = \{(0,1)\}$ 
is a uniformly bounded centered cover of $X$, which contains only one element (with uncountably many names $B(x,1)$). Note that given a collection of balls,  we can always remove the non-uniqueness issue by using the axiom of choice, selecting exactly one center and one radius for each ball, and then continue arguing as if centers and radii were unique, by sticking to that choice. However, in that case a collection such as $\mathcal{C}$ would be a centered cover not of the full interval $(0,1)$ but only of one point, its preassigned center. Note also that the definition of uniformly bounded cover only requires the existence of some choice of radii that satisfies the condition. For instance, we could have written $\mathcal{C} = \{B(x,1/x): x \in X\} = \{(0,1)\}$ and $\mathcal{C}$ 
would still be a uniformly bounded centered cover of $X$.
\end{example}

\begin{definition}  \label{BCP} 
A   metric space $(X, d)$ 
	 has the {\em strong Besicovitch covering property}  if there exists a constant
$L \ge 1$ such that for every  $A \subset X$ and every   uniformly bounded centered cover
$\mathcal{C}$ of $A$,
for some $m  \le L$
	there are disjoint subcollections  $\mathcal{C}_1 , \dots  , \mathcal{C}_m$  of $\mathcal{C}$ 
	such that 
	$A \subset \cup_{i = 1}^m \cup \mathcal{C}_i$.

We say that $(X, d)$ 
	 has the {\em Besicovitch covering property}  if there exists a constant
$L \ge 1$ such that for every  $A \subset X$, and every  uniformly bounded  centered cover
$\mathcal{C}$ of $A$,
 there is a subcollection $\mathcal{C}^\prime \subset \mathcal{C}$  satisfying
$$
\mathbf{1}_A\le	\sum_{B \in \mathcal{C}^\prime} \mathbf{1}_{B} 
\le L.
$$ 

A  collection $\mathcal{C}$ of balls in a metric space $(X, d)$ is a {\em Besicovitch family} if for each  $B \in \mathcal{C}$ it is possible to choose a center and a radius,
 such that whenever $B_1, B_2 \in \mathcal{C}$ are distinct balls, neither ball contains the selected center of the other.
 
The space $(X, d)$ has the {\em   Besicovitch
		intersection property}  with constant $L$,  if there exists an integer $L\ge 1$ 
		such that 
	 for every  Besicovitch family $\mathcal{C}$,  we have 
	$$
	\sum_{B \in \mathcal{C}} \mathbf{1}_{B} 
	\le L.
	$$
	\end{definition}

\begin{example} For a collection of balls $\mathcal{C}$ to be a Besicovitch family it is enough that some suitable assignment of centers and radii exists. 
For instance, let $X = [0,1]$
with the distance inherited from the line, let $B_1 = [0,3/4]$ and
let $B_2 = [1/4, 1]$. Then $\{B_1, B_2\}$ is  a
Besicovitch family,  since  we can write $B_1 = B^{\operatorname{cl}}(0,3/4)$ and $B_2 = B^{\operatorname{cl}}(1,3/4)$. Not all choices work: if we use  $B_1 = B^{\operatorname{cl}}(1/4, 1/2)$ and $B_2 = B^{\operatorname{cl}}(3/4,1/2)$, then the condition on the centers is not fulfilled. 
\end{example}

\begin{remark} Different metrics on the same set can define the same class of balls, and still with one metric the space satisfies all the Besicovitch type properties considered above, and with the other metric, all fail. This happens to the open upper half plane, with the Euclidean and hyperbolic metrics (cf. \cite[p. 237]{McC}): with the Euclidean metric the strong Besicovitch covering holds by Besicovitch Theorem, and hence, so do the weaker variants, while the Besicovitch intersection property fails in the hyperbolic plane, by the exponential growth of the area of balls with large radii, so it also lacks the stronger properties.
\end{remark}

\begin{remark} It is obvious that if $(X, d)$ has the strong Besicovitch covering property with constant $L$, 
then it has the Besicovitch covering property with constant $L$; and  it is almost obvious that if $(X, d)$ has the 
 Besicovitch covering property with constant $L$, 
then it has the Besicovitch intersection property with the same constant. The reason why the last assertion is not
entirely  obvious
	is that  radii are not assumed to be uniformly bounded in the
	definition of the Besicovitch intersection property. But this problem is
easily avoided by passing to an intersecting   finite subcollection:  let
	 $\mathcal{C}$ be an intersecting Besicovitch family of cardinality larger than
	 $L$,  and let $y \in \cap \mathcal{C}$ . We may suppose, by  throwing away some balls if needed,
	that $\mathcal{C} = \{B(x_1, r_1), \dots ,B(x_{L + 1}, r_{L + 1})\}$. 
	Let $R:= \max\{r_1, \dots , r_{L + 1}\}$, and let 
	$A := \{x_1, \dots , x_{L + 1}\}$. Since all balls in
	$\mathcal{C}$ are needed to cover $A$, and they all intersect at $y$, we conclude
	that  $(X, d)$ does not have the Besicovitch covering property with constant $L$.

\end{remark}

Next we cite, with minor notational modifications,  Example
3.4 from
\cite{LeRi2}, and prove that
 the space presented there is ultrametric (this is not shown in \cite{LeRi2}). Theorem
\ref{counter07}  contradicts \cite[Example 2.1]{KlTo}, where it is asserted
that every separable ultrametric space satisfies the Besicovitch covering property.

\begin{theorem} \label{counter07} There exists a separable  ultrametric space without
	the Besicovitch covering property.
\end{theorem}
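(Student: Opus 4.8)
The plan is to use the metric space $(X,d)$ exhibited in \cite[Example 3.4]{LeRi2}, for which the failure of the Besicovitch covering property is already established there and which is manifestly separable from its description; the only point left to verify is that $d$ is an ultrametric, i.e. that $d(x,y)\le\max\{d(x,z),d(z,y)\}$ for all $x,y,z\in X$. Thus my proof would consist of: (i) recalling the space, essentially verbatim (as the excerpt already does); (ii) noting separability, which is immediate from the explicit description; (iii) citing \cite{LeRi2} for the failure of the covering property; and (iv) proving the strong triangle inequality. Everything of substance is in step (iv); this is exactly the ingredient not addressed in \cite{LeRi2}, and it is what yields the consequence noted in the surrounding text, that being ultrametric does not force the Besicovitch covering property.

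For (iv) the argument is a case analysis organized around the hierarchical structure of the construction in \cite{LeRi2}: the prescribed distances take only countably many values, arranged by scale/level, and for a fixed pair of points the value of $d$ is governed by the coarsest level at which the two points separate (in tree language, by their branch point). One then runs through the possibilities for how $x$, $y$, $z$ are distributed among the blocks/branches of the construction. When two of the three points coincide, or lie in the same smallest block, the inequality is trivial or follows directly from the triangle inequality already verified in \cite{LeRi2}; the content is in the cases where $x$, $y$, $z$ occupy three genuinely different blocks, where one checks that the largest of the three pairwise distances is attained at least twice --- equivalently, that no point $z$ can be simultaneously strictly closer to $x$ and to $y$ than $x$ is to $y$.

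The main obstacle I anticipate is bookkeeping rather than a new idea: because the example glues together infinitely many pieces at infinitely many scales, one must take care to make the case list exhaustive, and in particular to pin down exactly what $d$ equals on each type of pair before running the comparisons. Once the hierarchical formula for $d$ is written out cleanly, each individual case is essentially a one-line verification using only the definition of an ultrametric (together with the fact, already known from \cite{LeRi2}, that $d$ is a metric), so no tool beyond that definition is required.
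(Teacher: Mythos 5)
Your overall plan coincides with the paper's: take the space of \cite[Example 3.4]{LeRi2}, cite that reference for the failure of the Besicovitch covering property, observe separability, and supply the missing verification that the metric is an ultrametric. The problem is that the one step carrying all the mathematical content --- step (iv) --- is not actually executed, and the way you describe it suggests you have a different space in mind. The example in question is not a hierarchical gluing of blocks at infinitely many scales; it is simply $X=\mathbb{N}\setminus\{0\}$ with $d(i,i)=0$ and $d(i,j)=1-1/\max\{i,j\}$ for $i\neq j$. There is no tree structure to organize a case analysis around, no ``coarsest level at which two points separate,'' and no bookkeeping over infinitely many pieces. A proof built on that picture would not get off the ground, because the cases you propose to enumerate do not correspond to anything in the actual construction.

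The verification you are after is a one-line computation: for distinct $i,j,k$, since $t\mapsto 1-1/t$ is increasing and $\max\{i,j\}\le\max\{i,j,k\}$,
$$
d(i,j)=1-\frac{1}{\max\{i,j\}}\le 1-\frac{1}{\max\{i,j,k\}}=\max\left\{1-\frac{1}{\max\{i,k\}},\,1-\frac{1}{\max\{k,j\}}\right\}=\max\{d(i,k),d(k,j)\},
$$
using $\max\{i,j,k\}=\max\{\max\{i,k\},\max\{k,j\}\}$. Your guiding principle --- that the largest of the three pairwise distances must be attained at least twice --- is indeed the right characterization of ultrametrics and does hold here, but asserting that it ``can be checked case by case once the hierarchical formula is written out'' is not a proof; you need to write down the formula for $d$ and do the check. (A minor further point: the paper also recalls, for the reader's convenience, why the covering property fails --- the balls $B^{cl}(i,1-1/i)=\{1,\dots,i\}$ cover $X\setminus\{1\}$, any subcover is infinite, and all its members contain $\{1,2\}$ --- whereas you delegate this entirely to the citation, which is acceptable but less self-contained.)
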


\begin{proof} Let $X :=  \mathbb{N}\setminus \{0\}$ with the distance
		$d(i,i):= 0$, and for $i \ne j$, 
	$d(i,j):= 1 - 1/\max\{i,j\}$. To see that $(X, d)$ is 
	an ultrametric space, select three different points $i, j, k \ge 1$ and
	note that
	$$
	d(i,j):= 1 - \frac{1}{\max\{i,j\}} \le 1 - \frac{1}{\max\{i,j, k\}}
$$
$$	= \max \left\{ 1 - \frac{1}{\max\{i,k\}}, 1 - \frac{1}{\max\{k,j\}}\right\}
=  \max \left\{ d(i,k), d(k,j)\right\}.
	$$
	For a proof of the fact that $X$ does not satisfy the
	Besicovitch covering property, we refer the reader to  \cite[Example 3.4]{LeRi}.\end{proof}

In an ultrametric space, every point in a ball  is a center, and so,
if two balls intersect, then one of them contains the other.
Thus, the following is immediate:

\begin{proposition} \label{1ball} Any  intersecting  Besicovitch  family in an ultrametric space 
contains exactly one ball.
\end{proposition}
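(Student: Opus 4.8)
The plan is to argue by contradiction, relying only on the two facts recalled immediately before the statement: in an ultrametric space every point of a ball is itself a center of that ball, and consequently, whenever two balls meet, one of them is contained in the other.

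First I would note that an intersecting family is nonempty, since $\cap\,\mathcal{C}\neq\emptyset$ by the meaning of ``intersecting'', so it is enough to rule out the existence of two distinct members. Suppose then that $B(x,r)$ and $B(y,s)$ are distinct balls of $\mathcal{C}$. Choosing any $z\in\cap\,\mathcal{C}$ we have $z\in B(x,r)\cap B(y,s)$, so the two balls intersect; by the nesting property for ultrametric balls, one of them contains the other, say $B(x,r)\subseteq B(y,s)$.

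Now I would simply observe that $x\in B(x,r)\subseteq B(y,s)$, hence $x\in B(y,s)$. This contradicts the defining condition of a Besicovitch family, which demands $x\notin B(y,s)$ for distinct $B(x,r),B(y,s)\in\mathcal{C}$. Therefore $\mathcal{C}$ contains at most one ball, and since it is nonempty, exactly one.

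The only point requiring a moment's care — and the closest thing to an obstacle — is the bookkeeping about ``names'': here two balls are distinct when they carry different (center, radius) data, not necessarily when they are different subsets of $X$. But the argument is unaffected, since the inclusion $B(x,r)\subseteq B(y,s)$ (and hence $x\in B(y,s)$) is valid even when the two balls coincide as sets while having different centers; no separate case analysis is needed.
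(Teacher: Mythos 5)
Your argument is correct and is exactly the reasoning the paper intends when it declares the proposition ``immediate'' after recalling that in an ultrametric space every point of a ball is a center, so that intersecting balls are nested: nesting gives $x\in B(y,s)$, contradicting the Besicovitch condition. Your remark about distinctness of (center, radius) names versus distinctness as sets is a sensible extra precaution and does not affect the argument.
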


\begin{proposition}  \label{equivBesicovitchultra} 
	Let $(X, d)$ be an ultrametric space satisfying the
	Besicovitch covering property 
	with  constant $L$. Then $(X, d)$ has both 
	the  Besicovitch covering property 
	and the strong Besicovitch covering property with constant $1$.
\end{proposition}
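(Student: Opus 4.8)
The plan is to leverage Proposition \ref{1ball}, which says that any intersecting Besicovitch family in an ultrametric space consists of a single ball. First I would observe that the hypothesis gives us, for a cover $\mathcal{C}$ of $A$ as in Definition \ref{BCP}, a subcollection $\mathcal{C}'$ with $\mathbf{1}_A \le \sum_{B \in \mathcal{C}'} \mathbf{1}_B \le L$. The key point is that the subcollection $\mathcal{C}'$, being a covering with bounded overlap, is in particular a Besicovitch family after discarding redundant balls: whenever $x$ lies in $B(y,s)$ and $y$ lies in $B(x,r)$ for two distinct balls of $\mathcal{C}'$, one of these balls contains the other (since in an ultrametric space intersecting balls are nested), so we may throw away the smaller one without losing the covering property of $A$ — provided the discarded ball's center is still covered, which it is, because it lies in the larger ball. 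Iterating this pruning (or applying Zorn's lemma / a maximality argument to handle the infinite case) yields a subcollection $\mathcal{C}'' \subset \mathcal{C}'$ that still covers $A$ and is a Besicovitch family.

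Next I would apply Proposition \ref{1ball} pointwise: for any $z \in A$, the balls of $\mathcal{C}''$ containing $z$ form an intersecting Besicovitch family, hence there is exactly one such ball. This immediately gives $\sum_{B \in \mathcal{C}''} \mathbf{1}_B = \mathbf{1}_A$ on $A$ (indeed $\mathbf{1}_{\cup \mathcal{C}''} \ge \mathbf{1}_A$ and the overlap is exactly $1$ at each point of $A$; we may further discard any ball of $\mathcal{C}''$ disjoint from $A$, or simply note the stated inequality $\mathbf{1}_A \le \sum \mathbf{1}_B \le L$ holds with $L = 1$ on $A$). This establishes the Besicovitch covering property with constant $1$. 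For the strong Besicovitch covering property with constant $1$, take $m = 1$ and $\mathcal{C}_1 := \mathcal{C}''$: since $\mathcal{C}''$ covers $A$, the single subcollection $\mathcal{C}_1$ satisfies $A \subset \cup \mathcal{C}_1$, and the "disjointness" requirement on the list of subcollections is vacuous when $m = 1$. Thus both properties hold with constant $1$.

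The main obstacle is the pruning step in the infinite setting: one must be careful that discarding smaller balls does not leave some point of $A$ uncovered, and that the process terminates in (or converges to) a genuine Besicovitch subfamily. I would handle this by a maximality argument — consider the collection of all subfamilies $\mathcal{D} \subset \mathcal{C}'$ that still cover $A$, partially ordered by reverse inclusion, and extract a minimal one by Zorn's lemma (chains have upper bounds because the intersection of a decreasing chain of covers of $A$ still covers $A$, using that at each point the overlap was bounded by $L$ so only finitely many balls sit over a given point and a decreasing chain among finitely many balls stabilizes). A minimal cover cannot contain two nested balls over a common point of $A$, hence is automatically a Besicovitch family, and the argument above applies. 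Alternatively, since the overlap bound $L$ already forces every point of $A$ to lie in at most $L$ balls of $\mathcal{C}'$, one can prune locally and patch, but the Zorn's lemma route is cleanest.
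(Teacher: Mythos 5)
Your overall strategy --- extract a minimal subcover $\mathcal{C}''$ of $\mathcal{C}'$ by Zorn's lemma (chains of covers have covering intersections because the overlap bound $L$ makes the fibre of $\mathcal{C}'$ over each point of $A$ finite), observe that in an ultrametric space a minimal cover is a Besicovitch family, and then invoke Proposition \ref{1ball} --- is sound and genuinely different from the paper's. The paper instead well-orders $A$ and transfinitely selects, at each step, a ball of $\mathcal{C}'$ of maximal radius among the (finitely many) balls of $\mathcal{C}'$ containing the first uncovered point, and then verifies directly, using ultrametricity, that the selected balls are pairwise disjoint. The two routes are of comparable length; yours trades the explicit greedy construction for an abstract minimality argument.

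There is, however, one genuine error in your final step: the disjointness requirement in the strong Besicovitch covering property is \emph{not} vacuous when $m=1$. By the convention stated at the start of Section 3 (``by a disjoint collection of sets, we mean that all the sets in such a collection are disjoint''), each $\mathcal{C}_i$ must itself consist of pairwise disjoint balls; the disjointness is within each subcollection, not between the subcollections on the list. Were it vacuous for $m=1$, every metric space would trivially satisfy the strong Besicovitch covering property with constant $1$ by taking $\mathcal{C}_1 = \mathcal{C}$, and the proposition would have no content. Likewise, for the Besicovitch covering property with constant $1$ the bound $\sum_{B \in \mathcal{C}''} \mathbf{1}_B \le 1$ must hold on all of $X$, not only on $A$ as you assert. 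Both points are repaired by the same one-line observation you almost make: apply Proposition \ref{1ball} at an \emph{arbitrary} point $z \in X$, not just at points of $A$. Since $\mathcal{C}''$ is a Besicovitch family, the balls of $\mathcal{C}''$ containing $z$ form an intersecting Besicovitch family and hence number at most one; therefore the balls of $\mathcal{C}''$ are pairwise disjoint, which yields both conclusions at once. With that correction your proof is complete.
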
 

\begin{proof} 	Let $(X, d)$ 
	be an ultrametric space and let it satisfy the Besicovitch Covering Property with constant
	$L$. Given   $A \subset X$, and a centered, uniformly bounded cover
	$\mathcal{C}
	$ of $A$, label all its balls using a suitable choice function. By hypothesis 
	there is a subcollection $\mathcal{C}^\prime \subset \mathcal{C}$  satisfying
	$$
	\mathbf{1}_A\le	\sum_{B(x,  r) \in \mathcal{C}^\prime} \mathbf{1}_{B(x,  r)} 
	\le L.
	$$
For each  $x \in A$,
	from the finite collection of balls in  $\mathcal{C}^\prime$ containing $x$ select
the	ball with maximal radius, say  $B(y_x, r_x)$, and disregard all the others (note that since the space is ultrametric, there is only one ball of maximal radius in $\mathcal{C}^\prime$ containing $x$).  
Let $\mathcal{C}^{\prime\prime} := \{B(y_x, r_x): x \in A\}$. Then  $\mathcal{C}^{\prime\prime}$ is disjoint, since given two different intersecting balls, one must be strictly contained in the other, contradicting the maximal radius choice for the points in the smaller ball.
\end{proof}

\begin{theorem} \label{notSBCP}  There exists a separable  metric space which satisfies the Besicovitch covering property, but for which the strong Besicovitch 	covering property fails.
\end{theorem}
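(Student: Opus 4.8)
The plan is to take for $X$ a disjoint union of finite ``gadgets'' $G_1,G_2,\dots$ placed pairwise at one fixed distance $\rho$, each rescaled so that $\operatorname{diam}G_n<\rho$; then $X$ is countable, hence separable, and every ball of $X$ either lies inside a single gadget or, having radius $\ge\rho>\operatorname{diam}G_n$, equals all of $X$ (the only caveat being that an open ball of radius exactly $\rho$ equals just the gadget of its center). Each $G_n$ is built to carry, at one scale $r_n>0$, a family of $n$ equal-radius balls that is as tangled as possible with respect to disjoint decompositions while keeping bounded overlap.

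Concretely I would take $G_n$ to be a metrized complete graph on $n$ distinguished vertices $a_1,\dots,a_n$, with a ``bridge'' point $b_{ij}$ for each pair, and would choose the distances (with $r_n\le d(a_i,a_j)<2r_n$ for $i\neq j$, and $d(a_k,b_{ij})\ge r_n$ for $k\notin\{i,j\}$) so that: (i) $b_{ij}\in B(a_i,r_n)\cap B(a_j,r_n)$ for all $i\neq j$; (ii) $B(a_i,r_n)$ contains none of the other $a_j$; and (iii) $\sum_{i\le n}\mathbf 1_{B(a_i,r_n)}\le 2$. Rescaling, I would also make $\operatorname{diam}G_n\to 0$.

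Given such gadgets, the equal radius covering property fails: for any $L$ set $n=L+1$, $A=\{a_1,\dots,a_n\}\subseteq G_n$, and $\mathcal C=\{B(a_i,r_n):i\le n\}$; by (ii) every ball of $\mathcal C$ is needed to cover $A$, and by (i) any two of them intersect, so a disjoint subcollection of $\mathcal C$ contains at most one ball, and $m\le L$ disjoint subcollections cannot cover $A$. Hence the strong Besicovitch covering property fails too. For the Besicovitch covering property, given $R$, $A$ and a cover $\mathcal C$ of $A$ by balls centered in $A$ with radii $\le R$: if some ball contains $A$ one is done; otherwise, by the confinement property, $\mathcal C$ splits gadget by gadget, and it suffices to extract from the part inside each $G_n$ a subcover of $A\cap G_n$ whose overlap is bounded by a constant independent of $n$, the pieces not interacting.

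Everything thus reduces to the following property of the gadgets, which I expect to be the main obstacle: every cover of every subset of $G_n$ by balls centered in it has a subcover whose pointwise sum of indicators is at most some absolute constant. This is far stronger than (iii) — already it forces the Besicovitch intersection constant of $G_n$ to be bounded in $n$, which, by the remark after Definition \ref{BCP}, is necessary for $X$ to have the Besicovitch covering property. The tension is that (i) makes each $a_i$ lie within $r_n$ of all $n-1$ bridges $b_{ij}$, so all $n-1$ balls centered at those bridges contain $a_i$; one must design the metric on $G_n$ so that, in any minimal subcover, only boundedly many of these can appear — for instance by forcing the bridges at each $a_i$ into boundedly many tight clusters, tuned so that bridges of one cluster look the same at scale $r_n$ — while still keeping (i)–(iii) and the separation between gadgets. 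Making this balancing act work, together with the case analysis it entails in the proof of the Besicovitch covering property, is the hard part.
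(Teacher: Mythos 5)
Your skeleton is sound and your mechanism for killing the equal radius covering property (balls $B(a_i,r_n)$, $i\le n$, none containing another's center, yet pairwise intersecting via bridge points $b_{ij}$, so that no disjoint subcollection has more than one ball) is exactly the mechanism the paper uses. But the proof is not complete: you yourself flag that the whole argument ``reduces to'' the gadgets having a Besicovitch covering constant bounded uniformly in $n$, call this ``the main obstacle'' and ``the hard part,'' and then do not construct a metric on $G_n$ for which it holds. Conditions (i)--(iii) concern only the $n$ balls $B(a_i,r_n)$; the Besicovitch covering property requires controlling \emph{every} centered cover of \emph{every} subset of $G_n$, in particular covers by balls centered at the bridges, each of which contains some $a_i$ together with $n-2$ other bridges. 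Whether the ``tight clusters'' you gesture at can be arranged compatibly with (i)--(iii) is precisely the content of the theorem, so as written this is a genuine gap, not a routine verification.

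For comparison, the paper closes this gap by exhibiting one concrete space rather than a sequence of gadgets: $X=\mathbb{N}^2$ with $d(x,y)=1$ if $x\ne y$ share a coordinate and $d(x,y)=2$ otherwise. Your picture is literally realized there with $a_i=(i,i)$, $r_n=1$, and bridges $b_{ij}=(i,j)$ (a unit ball $B^{cl}((a,b),1)$ is the union of row $a$ and column $b$, and three unit balls with centers pairwise sharing no coordinate have empty triple intersection). The point that makes the verification tractable is that in this space radii fall into only three regimes --- balls of radius $<1$ are singletons, balls of radius $>1$ are everything, and unit balls are row-plus-column sets --- so a greedy selection of unit balls whose centers are in ``general position'' yields a subcover with overlap at most $2$, and the leftover points are covered by disjoint singletons. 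If you want to salvage your multi-gadget approach, the honest fix is to take each $G_n$ to be a finite piece of this $\mathbb{N}^2$ (say $\{1,\dots,n\}^2$ with the same two-valued metric, rescaled), at which point the uniform bound on the covering constant comes from the same trichotomy of radii; but then the disjoint union adds nothing, and the single space $\mathbb{N}^2$ is simpler.
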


\begin{proof}   We take $X := \mathbb{N}^2$ with the  metric $d$ defined next.
 For all $x \in X$, we set $d(x,x) = 0$. Suppose  that $x \ne y$. If
 $x$ and $y$  belong to the same
 horizontal or to the same vertical copy of $\mathbb{N}$, that is,
 if either $x=(x_1, y_1)$ and $y = (x_2, y_1)$, or 
 $x=(x_1, y_1)$ and $y = (x_1, y_2)$, we set $d(x,y) = 1$; in
 all other cases,  we set $d(x,y) = 2$. In order to check that  $d$ is a metric,
 the only nontrivial statement we have to verify, is the triangle inequality when
 the three points $x,y,z$ are different. But then
 $d(x,y) \le 2 \le d(x,z) + d(z,y)$.

 In this example it seems
 more natural to consider metrically closed balls, and we will do so,
 though of course there is no difference between, say,  a metrically closed ball of
 radius 1 and an open ball of radius $3/2$. Note also that when a radius of a ball is $< 2$, the center of the said ball is unique.
 
  We define a choice function $\varphi$ (which assigns a center and a radius to each ball) on the set of all closed balls  $B^{\operatorname{cl}} \subset X$ as follows: $\varphi (X) := ((0,0), 2)$; if $B = \{(u,v)\}$, we set $\varphi (B) := ((u,v), 1/2)$; and for $(u, v) \in \mathbf{N}^2$,  if $B = \{ (u, w)  : w \in \mathbf{N}\} \cup  \{ (s, v)  : s \in \mathbf{N}\} $,  we set $\varphi (B) := ((u,v), 1)$. From now on, centers and radii refer to the preceding choices.
 
 Let us say that a collection of points is in general position if any two
 of them have different first coordinates, and different second coordinates;
  thus, points in general position are at distance two from each other.

 Given  three
  unit balls $B^{\operatorname{cl}}(x,1)$,  $B^{\operatorname{cl}}(y,1)$ and $B^{\operatorname{cl}}(z,1)$ with centers in general
 position, it is  clear that 
  $B^{\operatorname{cl}}(x,1) \cap B^{\operatorname{cl}}(y,1) \cap B^{\operatorname{cl}}(z,1) = \emptyset$. So, 
  given any  index set $I$  and any collection $\{B^{\operatorname{cl}}(x_i,1): i \in I \}$ of unit balls with centers in general position, 
 we have   $\sum_{i \in I} \mathbf{1}_{B^{\operatorname{cl}}(x_i,1)} \le 2$.

 Now it is easy to check that the Besicovitch covering property
 holds: for any $A \subset X$ and any centered cover
$\mathcal{C}$
of $A$, with the radii determined by $\varphi$, 
we define 
$\mathcal{C}^\prime$ as follows: if $\mathcal{C}$ contains the ball  $B^{\operatorname{cl}}((0,0),2)$,   set $\mathcal{C}^\prime := \{B^{\operatorname{cl}}((0,0),2)\}$. 
Next, we assume that  all balls in $\mathcal{C}$ have radii bounded
by 1. If all balls have radii strictly smaller than 1, then they are all
disjoint, and we set $\mathcal{C}^\prime = \mathcal{C}$. If
$\mathcal{C}$ contains at least one ball of radius exactly 1,
we construct $\mathcal{C}^\prime$ as follows: order lexicographically the
centers of the balls of radius 1, so $(x_1, y_1) <  (x_2, y_2)$ precisely when
$x_1 < x_2$ or when $x_1 = x_2$ and $y_1 < y_2$.  Let 
$\{B^{\operatorname{cl}}(w_i,1): i \in I \subset \mathbb{N} \}$ be the collection of unit balls in 
$\mathcal{C}$,
ordered according to their centers. Choose $B^{\operatorname{cl}}(w_{i_0},1) := B^{\operatorname{cl}}(w_0,1)$, and
supposing the balls $B^{\operatorname{cl}}(w_{i_j},1)$  have been selected for $1 \le j \le k$, 
let $B^{\operatorname{cl}}(w_{i_{k + 1}},1)$  be the first ball in the list centered
at a point of $A \setminus \cup_{j = 0}^k B^{\operatorname{cl}}(w_{i_j},1)$. Continue until
all points in $A$ that are centers of balls of radius 1 in 
$\mathcal{C}$ have been covered. If the chosen family     
$\{B^{\operatorname{cl}}(w_{i_j},1): j \in J \subset \mathbb{N} \}$  does not
cover $A$, for each $y \in A \setminus \cup  \{B^{\operatorname{cl}}(w_{i_j},1): j \in J\}$
choose $B^{\operatorname{cl}}(y,1/2)$, and let 
$\mathcal{C}^\prime$ be $\{B^{\operatorname{cl}}(w_{i_j},1): j \in J\} \cup
 \{B^{\operatorname{cl}}(y,1/2): y \in A \setminus \cup  \{B^{\operatorname{cl}}(w_{i_j},1): j \in J\}\}$.
 Since the selected balls of radii $1/2$ are all disjoint, and do not intersect
 the chosen balls of radius 1, we have that 
 $$
\mathbf{1}_A\le	\sum_{B^{\operatorname{cl}} (x,  r) \in \mathcal{C}^\prime} \mathbf{1}_{B^{\operatorname{cl}}(x,  r)} 
\le 2.
$$ 
However, $(X,d)$ does not satisfy the  strong Besicovitch 
 covering property with $r = 1$, since any two unit
 balls with centers in general position intersect at two points.  We give
 more details: let $A := \{(j,j): j \in \mathbb{N}\}$ be the main diagonal
 of $X$, and let 
 $
 \mathcal{C}
= \{B^{\operatorname{cl}}((j,j),1) : j \in \mathbb{N}\}
$. Since each point in $A$ belongs to only one ball, any covering subcollection
must be $\mathcal{C}$ itself. But if $i < j$, then
 $B^{\operatorname{cl}}((i,i),1) \cap B^{\operatorname{cl}}((j,j),1) = \{(i,j), (j,i)\}$, so any disjoint subcollection
 can contain at most one ball.
   \end{proof}

 \section{Weak doubling conditions for measures} 
 
Balls are sets of points. When centers and radii are not unique, we can regard a given choice of a radius and a center as a way of naming the ball. Here we note that standard doubling can be relaxed via suitable choices of ``names''.
To clarify this statement and motivate the definition below of weak $t$-bling, consider the following trivial example: let $X := \{0,1\}$ with the usual distance and the Dirac delta measure $\delta_1$, which fails to be doubling since
$\delta_1 (B(0, 3/4)) = 0$ and $\delta_1 (B(0, 3/2)) = 1$.
However,  standard covering arguments for doubling measures can be carried out simply by choosing a better name for the ball $\{0\}$, such as for instance, $\{0\} = B(0, 1/4)$. With this selection of radius we have $\delta_1 (B(0, 3/4)) = \delta_1 (B(0, 1/4)) = 0$ and so,  the tripling condition is satisfied  in a weak sense by $\delta_1$.

Let us recall the notion of metric measure space.
 
\begin{definition}  \label{tau} Let $(X, d)$ be a metric space. A Borel measure $\mu$ 
	on $X$ is   {\em $\tau$-additive} if for every
	collection  $\{O_\alpha : \alpha \in \Lambda\}$
	of  open sets, we have
	$$
	\mu (\cup_\alpha O_\alpha) = \sup_{\mathcal{F}} \mu(\cup_{i=1}^n O_{\alpha_i}),
	$$
	where the supremum is taken over all finite subcollections $\mathcal{F} = \{O_{\alpha_1}, \dots, O_{\alpha_n} \}$
	of  $\{O_\alpha : \alpha \in \Lambda\}$.
	If $\mu$  assigns finite measure
	to bounded Borel sets, we say it is {\em locally finite}.
	Finally, we call $(X, d, \mu)$  a {\em metric measure space} if
	$\mu$ is a  $\tau$-additive, locally finite  Borel measure on the metric space $(X, d)$. 
\end{definition}

Note that the class of  $\tau$-additive measures  includes all  Borel measures on 
separable metric spaces and all Radon measures on arbitrary metric spaces. We will always assume that measures are not identically zero.

\vskip .2 cm

Seemingly natural statements which do hold in $\R^d$ regarding balls, centers and radii, can in general fail to be true. For instance, it may happen that $B(x,r) \subsetneqq
B(y, s)$, but $B(x,2 r) \supsetneqq
B(y, 2 s)$.

\begin{example}  
Fix $0 < \varepsilon \ll 1$. Consider first the subset of the real line $\{ - 2 - \varepsilon, 0, 1 + \varepsilon, 2, 2 + \varepsilon\}$, with the inherited distance. Then both $\{0, 1 + \varepsilon, 2\} = B^{\operatorname{cl}} (0, 2)$ and $\{0, 1 + \varepsilon, 2, 2 + \varepsilon\} = B^{\operatorname{cl}} ( 1 + \varepsilon, 1 + \varepsilon)$ are balls; note that   $B^{\operatorname{cl}} (0, 2)\subsetneqq  B^{\operatorname{cl}} ( 1 + \varepsilon, 1 + \varepsilon)$ and $B^{\operatorname{cl}} (0, 4)\supsetneqq  B^{\operatorname{cl}} ( 1 + \varepsilon, 2 + 2 \varepsilon)$.
\end{example}

\begin{definition} A Borel measure $\mu$ on $(X,d)$ is {\em doubling}  if there exists a 
	$C> 0 $ such that for all $r>0 $ and all $x\in X$, $\mu (B(x, 2 r)) \le C\mu(B(x,r))$. 
\end{definition}

Thus, the doubling condition applies to all possible choices of centers and radii of a ball $B$. 
Next we consider the weak doubling and tripling properties of  measures, and their natural generalization to other expansion factors.

\begin{definition} A Borel measure $\mu$ on $(X,d)$ has {\em weakly bounded $t$-growth} if there exist a $C \ge 1 $ and a $t > 1$ such that for every ball $B$  of $X$, it is possible to find an $x \in B$ and an $r > 0$ with
$B  = B(x,r)$ and  $\mu (B(x, t r)) \le C\mu(B(x,r))$. If $t = 2$ we say that $\mu$  is {\em weakly doubling}, while if   $t = 3$ we call $\mu$ {\em weakly tripling}. By analogy with doubling, we will use {\em weakly $t$-bling} as an abbreviation to refer to measures with  weakly bounded $t$-growth. 
\end{definition}

Recall that if a measure is doubling, no ball can have measure zero. On the other hand, we have the following

\begin{theorem} \label{counter007} There exists a separable  ultrametric space for which all measures are weakly $t$-bling with constant $C = 1$, for every $t >1$.
\end{theorem}

\begin{proof} Let $X :=  \mathbb{N}$ with the discrete metric
	$d(i,i):= 0$, and for $i \ne j$, 
	$d(i,j):= 1$. Since the only balls are the singletons and the whole space, it is enough to set $X := B^{\operatorname{o}}(0, 2)$ and  $\{n\} := B^{\operatorname{o}}(n, 1/t)$ for each $n\in X$. It follows that for every ball $B \subset X$ with the indicated radius, we have $B = tB$.
\end{proof}

While a space supporting a doubling measure is automatically separable, this is not the case if $\mu$ is only weakly $t$-bling: just consider the preceding proof but taking $X =  \mathbb{R}$.

\begin{example} A  measure can be weakly $t$-bling for every $t \in (1,3)$ and still fail to be weakly tripling. Let $X  \subset \R$ (with the inherited distance) be defined as follows: set $x_{-3} := - 1$, $x_{-2} := - 1/3 $, $x_{-1} := 0$, $x_{0} := 4^0$, and for $n\ge 1$, 
$x_{n} := \sum_{k = 0}^n 4^k$, so $x_{n+1} - x_{n} = 4^{n + 1}$. Let $X := \{x_{n} : n \ge -3\}$ be the metric space under consideration. 
We define a discrete measure $\mu$ on $X$, by setting $\mu \{x_{-3}\} = \mu \{x_{-2}\} = \mu \{x_{-1}\} := 1$ and for $n\ge 0$, $\mu \{x_{n}\} := 2^{2^n}$. To see that $\mu$ is not weakly tripling, take $n\gg 1$
and consider the ball 
 $B = B^{\operatorname{cl}} (x_{n},  4^{n + 1}/3) = \{x_{-2},  \dots, x_{n}\}$. Note that its only center is $x_{n}$, so its smallest possible radius is 
 $x_{n} - x_{-2} = 4^{n + 1}/3$. Tripling the radius yields $x_{ n + 1} \in  B^{\operatorname{cl}} (x_{n}, 4^{n + 1})$, and since $\mu(B^{\operatorname{cl}} (x_{n},  4^{n + 1}/3) )/ \mu( \{x_{n + 1}\}) \to 0$ as $n\to \infty$, the weakly tripling condition fails. 
 
 Next, fix $t \in (1,3)$; we may, without loss of generality, assume that $t > 2$.
Let $B =  \{x_{j},  \dots, x_{n}\}$ be an arbitrary ball, with the indices in increasing order and $j \le n$ (so $B$ could be just one point). Note that if $j = n$,  $B =  \{x_{j}\}$ can always be expressed as $B^{\operatorname{cl}} (x_{j}, 1/10)$, while if  $j < n$ and  $n \ge 0$,   then we can write $B = B^{\operatorname{cl}}( x_{n},  x_{n} -  x_{j})$, so these sets are indeed balls (for $n < 0$ the intervals $ \{x_{j},  \dots, x_{n}\}$ are also balls, as can be checked directly).
If  $j = -3$, so 
 $x_{j} = -1$, we write $B = B^{\operatorname{cl}}( x_{j},  x_{n} -  x_{j})$. Then $B = t B$ provided $n >1$, while if $n\le 1$ we suitably adjust the weakly $t$-bling constant $C$, as there are only finitely many possibilites. Finally,
 if   $j > -3$ then
 $x_{n}$ is the unique center of $B$ and we write $B = B^{\operatorname{cl}}( x_{n},  x_{n} -  x_{j})$. Since $t B$ does not contain points strictly to the right of $x_n$, we conclude that $\mu$ is weakly $t$-bling.
\end{example}

The standard covering argument shows that weak tripling with constant $C$ is enough to ensure the
weak type $(1,1)$ of the uncentered maximal operator, with the same constant. An additional iteration allows us
to relax the hypothesis to weak $(1 + \sqrt 2)$-bling, but the constant we obtain is $C^2$. We recall the relevant definitions regarding maximal operators.

\begin{definition}\label{maxfun} Let $(X, d, \mu)$ be a metric measure space and let $g$ be  a locally integrable function 
	on $X$. The {\em  centered Hardy-Littlewood maximal operator} $M_{\mu}$ is given by 
	\begin{equation*}
	M_{ \mu} g(x) := \sup _{\{r > 0 \ : \ 0 < \mu (B(x, r)) \}} \frac{1}{\mu
		(B(x, r))} \int _{B(x, r)} \vert g\vert d\mu,
	\end{equation*}
	while the {\em  uncentered Hardy-Littlewood maximal operator} $M_{\mu}^u$ is defined as 
	
	\begin{equation*}
	M_{ \mu}^u g(x) := \sup _{\{B : \ 0 < \mu (B) \mbox{ and } x \in B \}} \frac{1}{\mu
		(B)} \int _{B} \vert g\vert d\mu.
	\end{equation*}
	We say that  $M_{\mu}$ satisfies a
		weak type $(1,1)$ inequality if there exists a constant $c > 0$ such that
		\begin{equation*}
		\mu (\{M_{\mu} g > \alpha\}) \le \frac{c \|g\|_{L^1(\mu)}}{\alpha},
		\end{equation*}
		where $c=c(\mu)$ depends neither on $g\in L^1 (\mu)$
		nor on $\alpha > 0$. The lowest constant $c$ that satisfies the preceding
		inequality is denoted by $\|M_{\mu}\|_{L^1\to L^{1, \infty}}$. The definitions and notations regarding the weak type (1,1) are entirely analogous for the uncentered maximal operator, and more generally, for arbitrary sublinear operators.
	\end{definition}

\begin{theorem}\label{HLMF}  Let $(X, d, \mu)$ be a metric measure
 space, and let $\mu$ be  weakly $(1 + \sqrt 2)$-bling  with constant $C$. Then the uncentered Hardy-Littlewood maximal operator $M^u_{\mu}$ associated with $\mu$ satisfies $\|M^u_{\mu}\|_{L^1  \to L^{1,\infty}}  \le  C^2$.
\end{theorem}

\begin{proof} Fix $\varepsilon > 0$, let $a > 0$, and let $f\in L^1(\mu)$ be such that $\|f\|_1 > 0$. 
A priori we might have $\mu \{M^u_{\mu} f > a\} < \infty$ or  $\mu \{M^u_{\mu} f > a\} = \infty$. After dealing with the case $\mu \{M^u_{\mu} f > a\} < \infty$, we will show that   $\mu \{M^u_{\mu} f > a\} = \infty$ cannot 
happen. Of course, if $\mu \{M^u_{\mu} f > a\} = 0$ there is nothing to prove, so we can assume that $a >0$ is such that $\mu \{M^u_{\mu} f > a\} > 0$. Now, if $\mu \{M^u_{\mu} f > a\} < \infty$, 
 it is enough to find  a disjoint collection of balls $\{B^{\operatorname{o}}_{i_1}, \dots,   B^{\operatorname{o}}_{i_m}\}$ with  $a \mu B^{\operatorname{o}}_{i_j} < \int_{B^{\operatorname{o}}_{i_j}} |f|$ and 	$$
	(1 - \varepsilon) \mu \{M^u_{\mu} f > a\} 
	< 
	C^2 \mu \cup_{j= 1}^m B^{\operatorname{o}}_{i_j}.
	$$
Once we have this disjoint collection, the rest of the proof is standard.

For each $x\in \{M^u_{\mu} f > a\}$ select a $B^{\operatorname{o}}_x$ containing $x$, such that $a \mu B^{\operatorname{o}}_x < \int_{B^{\operatorname{o}}_x} |f|$.
	By the $\tau$-additivity of $\mu$, there is a finite subcollection 
	$\{B^{\operatorname{o}}_i :  1 \le i \le N\}$ 
	of balls with positive measure  satisfying	$$
	(1 - \varepsilon) \mu \{M^u_{\mu} f > a\} 
	=
	(1 - \varepsilon) \mu \cup \{ B^{\operatorname{o}}_x : x\in  \{M^u_{\mu} f > a\}\}
	< 
	\mu \cup_{i= 1}^N B^{\operatorname{o}}_i.
	$$
Next, for each $i = 1, \dots , N$ select a center $z_i$ and a radius $r_i$ such that $B^{\operatorname{o}}_i = B^{\operatorname{o}} (z_i, r_i)$ and $\mu B^{\operatorname{o}} (z_i, (1 + \sqrt 2) r_i) \le C \mu B^{\operatorname{o}} (z_i, r_i)$. Reorder the balls by non-increasing radii $r_i$,  and apply the standard selection procedure to obtain a disjoint subcollection:  let $B^{\operatorname{o}}_{i_1} $ be the first ball in the reordered list, select $B^{\operatorname{o}}_{i_2}$
to be the first ball in the list not intersecting $B^{\operatorname{o}}_{i_1}$, and in general, choose 
$B^{\operatorname{o}}_{i_k}$ to be the first ball in the list not intersecting any of the previously selected balls.
Then the process finishes after a finite number of steps with, let's say, $B^{\operatorname{o}}_{i_m}$.

We need to control the mass lost with the balls not chosen. Recall that  $\mu B^{\operatorname{o}} (z_{i_1}, (1 + \sqrt 2) r_{i_1}) \le C \mu B^{\operatorname{o}} (z_{i_1}, r_{i_1})$. By another application of weak $(1 + \sqrt 2)$-bling, this time to $B^{\operatorname{o}} (z_{i_1}, (1 + \sqrt 2) r_{i_1})$,  we conclude that there are a $z \in B^{\operatorname{o}} (z_{i_1}, (1 + \sqrt 2) r_{i_1})$ and an $r > 0$ such that $B^{\operatorname{o}} (z_{i_1}, (1 + \sqrt 2) r_{i_1}) = B^{\operatorname{o}} (z,  r)$ and 
$ \mu B^{\operatorname{o}} (z, (1 + \sqrt 2) r) \le C \mu B^{\operatorname{o}} (z, r)$, whence $ \mu B^{\operatorname{o}} (z, (1 + \sqrt 2) r) \le C^2 \mu B^{\operatorname{o}}_{i_1}$.

Let 
$B^{\operatorname{o}}(z_{1,1}, r_{1,1}), \dots,   B^{\operatorname{o}}(z_{1, k}, r_{1, k})$ be the collection of all balls from $\{B^{\operatorname{o}}_i = B^{\operatorname{o}} (z_i, r_i) :  1 \le i \le N\}$  that intersect $B^{\operatorname{o}}_{i_1}$ and are different from it. By the selection procedure we have $r_{i_1} \ge r_{1,1}, \dots,  r_{1,k}$, so $z_{1,1}, \dots,  z_{1,k} \in B^{\operatorname{o}} (z_{i_1}, (1 + \sqrt 2) r_{i_1})$. We claim that  $\cup_{s=1}^k  B^{\operatorname{o}}(z_{1,s}, r_{1,s}) \subset B^{\operatorname{o}} (z, (1 + \sqrt 2) r)$. If 
 $\cup_{s=1}^k  B^{\operatorname{o}}(z_{1,s}, r_{1,s}) \subset B^{\operatorname{o}} (z,  r) = B^{\operatorname{o}} (z_{i_1}, (1 + \sqrt 2) r_{i_1})$ there is nothing to show, so assume that for some $s$ we can find a $w \in B^{\operatorname{o}}(z_{1,s}, r_{1,s}) \setminus B^{\operatorname{o}} (z_{i_1}, (1 + \sqrt 2) r_{i_1})$.  Then $z_{1,s} \notin B^{\operatorname{o}} (z_{i_1}, \sqrt 2 r_{i_1})$, for otherwise 
we would have $w \in B^{\operatorname{o}} (z_{i_1}, (1 + \sqrt 2) r_{i_1})$. Hence $d(z_{1,s}, z_{i_1}) \ge \sqrt 2 r_{i_1}$ and thus $r > r_{i_1} /\sqrt 2$ by the triangle inequality, for both $z_{1,s}, z_{i_1} \in  
B^{\operatorname{o}} (z,  r)$. But now $d(w, z) \le d(w, z_{1,s}) + d(z_{1,s}, z) < r_{1,s} + r \le (\sqrt 2 + 1) r.$  
By repeating the process with each of the selected balls $B^{\operatorname{o}}_{i_2}, \dots, B^{\operatorname{o}}_{i_m}$, we conclude that $\mu \cup_{i= 1}^N B^{\operatorname{o}}_i \le C^2 \mu \cup_{j= 1}^m B^{\operatorname{o}}_{i_j}.$

Finally, using the same notation as above, 
if $\mu \{M^u_{\mu} f > a\} = \infty$, then it is possible to find,  
by the $\tau$-additivity of $\mu$,  a finite subcollection 
	$\{B^{\operatorname{o}}_i :  1 \le i \le N\}$ 
	of balls with positive measure  satisfying	$
\frac{2  C^2 \|f\|_1}{a}
	< 
	\mu \cup_{i= 1}^N B^{\operatorname{o}}_i.
	$
Once we have this finite collection, we apply the disjointification process used before to obtain 
$$ \frac{2  C^2 \|f\|_1}{a}
<
\mu \cup_{i= 1}^N B^{\operatorname{o}}_i
\le
C^2 \mu \cup_{j= 1}^m B^{\operatorname{o}}_{i_j} 
$$
$$
= C^2 \sum_{j= 1}^m \mu  B^{\operatorname{o}}_{i_j} <  
C^2 \sum_{j= 1}^m \frac{1 }{a} \int_{ B^{\operatorname{o}}_{i_j} } |f| d \mu
\le \frac{C^2 \|f\|_1}{a}.$$
\end{proof}

I do not know if the expansion factor $(1 + \sqrt 2)$ can be lowered in the preceding result.

\vskip .2 cm

An anonimous referee asks for an explanation about the source of  $(1 + \sqrt 2)$. To see where it comes from, consider the example of two intersecting balls $B(x,1)$ and $B(y, 1)$. Let $t > 1$, and suppose that there is a $w \in B(y, 1) \setminus B(x , 1 + t)$, so $d(x,y) \ge t$. In order to apply the weak $t$-bling condition, we can choose suitable $z \in B(x, 1 + t)$ and $r > 0$ such that $B(z,r) =  B(x , 1 + t)$ and $t$-bling holds for that choice. We want to ensure that $w \in B(z,(1 + t) r)$. By the triangle inequality, $d(z, w) \le d(z, y) + d(y,w) \le r + 1$. Hence, we select $t$ as small as possible but satisfying $r + 1 \le (1 + t)r$, or equivalently, $1 \le t r$. Since all the information we have about $r$ is that $r\ge t/2$, we select the smallest $t$ such that $1 \le t (t/2)$, and prove that this choice works.

\vskip .2 cm

Next we motivate a question formulated by a second  anonimous referee. One way of obtaining weak type bounds without doubling or Besicovitch conditions is to make the maximal operators smaller, via the  division by the measure  of a ball with radius larger than the radius of the ball one integrates over. This was done for the centered maximal operator in \cite{NTV},  as part of the project to  develop a Calder\'on-Zygmund theory without doubling. 
The centered operator  defined in \cite{NTV} used the expression $ \frac{1}{\mu
	(B(x, 3 r))} \int _{B(x, r)} \vert g\vert d\mu$, instead of the standard quotient
  $ \frac{1}{\mu
	(B(x, r))} \int _{B(x, r)} \vert g\vert d\mu$, and the resulting maximal operator was easily shown to be of weak type (1,1) with optimal constant 1.

The properties of analogous operators with more general expansion factors in place of 3, have been further explored in \cite{Sa}, \cite{Te}, \cite{Ste}, \cite{Ste1}. More specifically, the smallest expansion factors that still yield the weak type bounds are different in the centered and uncentered case, cf. \cite{Sa}, \cite{Te}, \cite{Ste}.

 In this context the second referee asks whether one can find a result related to Theorem \ref{HLMF}, but dealing with the centered maximal operator and using an expansion factor  smaller than $1 + \sqrt 2$. We have not been able to obtain weak type boundedness under a weaker $t$-bling assumption. An obstacle to the use of a smaller $t$ comes from the fact that by doubling the radius one might be able to cover one center of a ball, but perhaps not all its centers.
In other words, the possible non-uniqueness of centers can blur the lines separating the centered from the uncentered case. For instance, in ultrametric sapces every point in a ball is a center, so the centered and the uncentered maximal operators are exactly the same.

We have seen that the assumption of weak tripling can be relaxed when dealing with the uncentered maximal operator.
As a partial answer to the second referee, we show  that  it is not possible to go below weak doubling in the centered case (and we do not know if weak doubling suffices).

\begin{example}  We modify \cite[Example 4.1]{Al1} to show that the weak type (1,1)  can fail under weak $t$-bling if $t < 2$. Furthermore we do not even need to consider the maximal operator, we can just use the averaging operator associated to balls of radius 1, given by
\begin{equation*}
A_{1} g(x) := \frac{1}{\mu
	(B^{\operatorname{cl}}(x, 1))} \int _{B^{\operatorname{cl}}(x, 1)}  g d\mu.
\end{equation*}

We define next a   path connected subset   $P\subset \mathbb{R}^2$, on which we use  the path metric  $d$
instead of the ambient space metric: the distance between
two points is the length of the shortest path joining them. 
Start with the positive $x$-axis. For each $n\ge 1$, select $2^n$ points from the circumference of radius 1 centered at
$(9^n, 0)$: $z_{n,1}\dots, z_{n,2^n}\in \{(x,y) \in\mathbb{R}^2 : (x - 9^n)^2 + y^2 = 1\}$. Join the points 
 $z_{n,1}\dots, z_{n,2^n}$ to the center $(9^n, 0)$ using straight line segments (radii), let $P$ be the union of 
 the positive $x$-axis with all the ``spikes" attached to the centers $(9^n, 0)$, and  let $\mu$ be the counting
measure on the points $(9^n, 0)$ and  $z_{n,1}\dots, z_{n,2^n}$, for every $n\ge 1$. Now $d(z_{n,k}, (9^n, 0)) = 1$, while
if $m\ne n$,  $d(z_{n,k}, z_{m,j}) \ge 9$. Thus,  $\mu B^{\operatorname{cl}}(z_{n,k}, 1) = 2$ and $\mu B^{\operatorname{cl}}((9^n, 0), 1) = 2^n + 1$. Next, we define a metric space $X \subset P$ with the inherited distance as follows: $X := \{x \in P : \mu (\{x\}) = 1\}$.
Then  $\|\mathbf{1}_{\{(9^n, 0) \}}\|_{L^1} = 1$ and $(A_{1} \mathbf{1}_{\{(9^n, 0) \}}) (z_{n, k}) = 1/2$, from whence it follows that
 $\|A_{1}\|_{L^1\to L^{1, \infty}} \ge  2^{n - 1}$ for every $n \ge 1$.
 
 Finally, we check that whenever $ t \in (1,2)$, the measure $\mu$ is weakly $t$-bling (to see that is not weakly doubling, just consider the balls $B^{\operatorname{cl}}(z_{n,k}, 1)$, which contain exactly 2 points, have $z_{n,k}$ as its unique center, and 1 as its smallest possible radius). For singletons $\{x\}$ we just write $\{x\} = B^{\operatorname{cl}}(x, 1/3)$ and note that $\{x\} = B^{\operatorname{cl}}(x, (1 + t)/3)$. We also have $B^{\operatorname{cl}}( (9^n, 0)), 1) = B^{\operatorname{cl}}( (9^n, 0)), 1 + t)$ and
 $B^{\operatorname{cl}}(z_{n,k}, 1) = B^{\operatorname{cl}}(z_{n,k}, 1 + t)$. The case left to consider is that of balls with large radii. 
 For (closed) balls $B$ intersecting more than one ball $B^{\operatorname{cl}}( (9^n, 0)), 1)$, say, $B \cap  B^{\operatorname{cl}}( (9^i, 0)), 1) \ne \emptyset$ and $B \cap B^{\operatorname{cl}}( (9^j, 0)), 1) \ne \emptyset$, where
 $i < j$ and these are respectively the smallest and largest indices for which the intersection is nonempty, 
 we select any center $x$ and the smallest possible radius $r$ for which  $B = B^{\operatorname{cl}}(x,r)$. Then $r \le d( B^{\operatorname{cl}}( (9^1, 0)), 1),  B^{\operatorname{cl}}( (9^j, 0)), 1) < 9^j$, so if $B^{\operatorname{cl}}(x, 2r) \ne B(x,r)$, then the new points in the difference set  must all come from balls  $B^{\operatorname{cl}}( (9^m, 0)), 1)$ with $m \le i$. Since $\mu  B^{\operatorname{cl}}( (9^j, 0)), 1)$ is comparable to $\sum_1^j \mu  B^{\operatorname{cl}}( (9^k, 0)), 1)$, we conclude that $\mu$ is weakly $t$-bling.
 \end{example}

\begin{example}  We explain with some detail why the usual arguments that yield weak type bounds under weaker hypotheses for the centered case, fail in our context. Take $X := (-2, -1] \cup \{0\} \cup [1,2)$ with the metric inherited from the real line, and with the measure $\mu$ given by the sum of the restriction of Lebesgue measure plus $\delta_0$. By the standard covering argument for the real line, the uncentered operator is of weak type (1,1) with bound at most 2, for every Borel measure. Now 
$\{M_{\mu} \mathbf{1}_{\{0\}} \ge 1/2\} = X$. For each $x\in [1,2)$ we see that $M_{\mu} \mathbf{1}_{\{0\}} (x)
= \frac{1}{\mu B^{\operatorname{cl}}(x,x) } \int_0^{2x} \mathbf{1}_{\{0\}} (t) dt$, for each $x\in (-2, -1]$ we make the symmetric choice of averaging ball, and finally, for $0$ we select $B^{\operatorname{cl}}(0,1/10) $. Then we get a covering collection with $3$ different balls but uncountably many names. Furthermore, if we apply the operations of doubling the radius, the ball $ \{0\} \cup [1,2)$ will give rise to uncountably many different balls $B^{\operatorname{cl}}(x, 2 x)$. Likewise,
if one wants to cover this set with balls of small radii, as in the Stein-Str\"{o}mberg argument (cf. \cite[Lemma]{StSt}), then we also get uncountably many different balls $B^{\operatorname{cl}}(x, t x)$, where $0 < t \ll 1$. While $\mu$ is doubling, the preceding example can be modified, changing both the space and the measure, so for balls such as $ \{0\} \cup [1,2)$, one may be forced to select a small radius, as per the weak doubling hypothesis. But then small, far away balls  $B^{\operatorname{cl}}(x, t x)$ may fail to be covered when the radius is doubled.
\end{example}

\end{document}